\theoremstyle{plain}
\newtheorem{theorem}{Theorem}
\newtheorem{proposition}{Proposition}
\newtheorem{corollary}{Corollary}
\theoremstyle{remark}
\renewcommand{\qed}{\hfill\rule{1ex}{1ex}}
\renewenvironment{proof}[1]{\trivlist\item[\hskip\labelsep{\bf Proof{#1}}.]}{\qed\endtrivlist}
\begin{document}

\title{Semiring identities of semigroups of reflexive relations and upper triangular boolean matrices\thanks{Supported by the Russian Science Foundation (grant No. 22-21-00650).}}
\author{S. V. Gusev}
\date{\small Institute of Natural Sciences and Mathematics\\
Ural Federal University\\
620000 Ekaterinburg, RUSSIA\\
\texttt{sergey.gusb@gmail.com}}
\maketitle

\begin{abstract}
We show that the following semirings satisfy the same identities: the semiring $\mathcal{R}_n$ of all reflexive binary relations on a set with $n$ elements, the semiring $\mathcal{U}_n$ of all $n\times n$ upper triangular matrices over the boolean semiring, the semiring $\mathcal{C}_n$ of all order preserving and extensive transformations of a chain with $n$ elements. In view of the result of Kl\'ima and Pol\'ak, which states that $\mathcal{C}_n$ has a finite basis of identities for all $n$, this implies that the identities of $\mathcal{R}_n$ and $\mathcal{U}_n$ admit a finite basis as well.
\end{abstract}

An \emph{additively idempotent semiring} (ai-semiring, for short) is an algebra $\mathcal{S}=(S, +, \cdot)$ of type $(2, 2)$ such that the additive reduct $(S,+)$ is a \emph{semilattice} (that is, a commutative idempotent semigroup), the multiplicative reduct $(S,\cdot)$ is a semigroup, and multiplication distributes over addition on the left and on the right, that is, $\mathcal{S}$ satisfies the identities $x(y+z)\approx xy+xz$ and $(y+z)x\approx yx+zx$.

The set of all reflexive binary relations on a set with $n$ elements forms an ai-semiring under set-theoretical union and multiplication. This ai-semiring can be conveniently thought of as a subsemiring of the ai-semiring of all $n\times n$ matrices (with the usual matrix multiplication and addition) over the boolean semiring $\mathcal{B}=\langle\{0,1\};+,\cdot\rangle$ with the operations defined by the rules 
\[
0\cdot 0=0\cdot 1=1\cdot 0=0+0=0,\quad 1\cdot 1=1+0=0+1=1+1=1.
\]
Namely, it can be identified with the subsemiring consisting of matrices in which all diagonal entries are~1. Denote this semiring by $\mathcal{R}_n=\langle R_n;+,\cdot\rangle$. By $\mathcal{U}_n=\langle U_n;+,\cdot\rangle$ we denote the subsemiring of $\mathcal{R}_n$ consisting of upper triangular matrices, that is, matrices $\bigl(\alpha_{ij}\bigr)$ with $\alpha_{ij}=0$ for $j<i$. 

A transformation $\alpha$ of a partially ordered set $\langle Q;\le\rangle$ is called \emph{order preserving} if $q\le q'$ implies $q.\alpha\le q'.\alpha$ for all $q,q'\in Q$, and \emph{extensive} if $q\le q.\alpha$ for every $q\in Q$. We say that an ai-semiring $\langle S;+,\cdot\rangle$ is a \emph{join-semiring} of order preserving and extensive transformations of a join-semilattice $\langle Q;\le\rangle$ if $\langle S;\cdot\rangle$ is a semigroup of order preserving and extensive transformations of $\langle Q;\le\rangle$ and $q.(\alpha+\beta)=\mathsf{sup}(q.\alpha,q.\beta)$ for all $\alpha,\beta\in S$ and $q\in Q$.  For example, the set of all order preserving and extensive transformations of a chain with $n$ elements forms a join-semiring. Denote this semiring by $\mathcal{C}_n=\langle C_n;+,\cdot\rangle$.

In~\cite{Volkov-04}, Volkov shows that the monoids $\langle R_n;\cdot\rangle$, $\langle U_n;\cdot\rangle$ and $\langle C_n;\cdot\rangle$ satisfy the same identities and these identities admit a finite basis if and only if $n\le 4$. In contrast, by the result of Kl\'ima and Pol\'ak~\cite{Klima-Polak-10}, the ai-semiring $\mathcal{C}_n$ has a finite basis of identities for each $n$. In the present note we prove the following theorem.

\begin{theorem}
\label{finite basis}
For each $n$, the three ai-semirings $\mathcal{U}_n$, $\mathcal{R}_n$ and
$\mathcal{C}_n$ satisfy the same identities and these identities admit a finite basis.
\end{theorem}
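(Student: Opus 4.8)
The plan is to split the theorem into its two assertions and prove them in the right order: first that $\mathcal{U}_n$, $\mathcal{R}_n$ and $\mathcal{C}_n$ satisfy the same identities, and then, granting this, the finite basis property. The second assertion is then immediate: since $\mathcal{C}_n$ has a finite basis of identities by the Kl\'ima--Pol\'ak theorem, the very same finite set is a basis for all three semirings once we know they have identical identity sets. So essentially all the work goes into showing $\mathrm{Id}(\mathcal{U}_n)=\mathrm{Id}(\mathcal{R}_n)=\mathrm{Id}(\mathcal{C}_n)$, where $\mathrm{Id}(\cdot)$ denotes the set of satisfied ai-semiring identities. Throughout I use the standard reduction that, modulo commutativity and idempotency of $+$ together with distributivity, every ai-semiring term is a finite set $A$ of nonempty words, multiplication being the concatenation product of such sets and addition their union; an identity $A\approx B$ holds in one of the matrix semirings exactly when, for every substitution $\theta$ of the variables by matrices, the boolean unions $\bigcup_{w\in A}w\theta$ and $\bigcup_{w\in B}w\theta$ agree.

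For the easy inclusions, I would first note that $\mathcal{U}_n$ is by definition a subsemiring of $\mathcal{R}_n$, whence $\mathrm{Id}(\mathcal{R}_n)\subseteq\mathrm{Id}(\mathcal{U}_n)$. Next I would embed $\mathcal{C}_n$ into $\mathcal{U}_n$ as an ai-semiring, sending an order preserving extensive transformation $\alpha$ of the chain $1<2<\cdots<n$ to the matrix $R_\alpha$ with $(R_\alpha)_{ij}=1$ iff $i\le j\le i.\alpha$, so that row $i$ of $R_\alpha$ is the interval $[i,i.\alpha]$. Extensivity $i\le i.\alpha$ makes $R_\alpha$ upper triangular with $1$'s on the diagonal, so $R_\alpha\in U_n$. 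The map $\alpha\mapsto R_\alpha$ is injective (the largest $j$ in row $i$ recovers $i.\alpha$) and turns pointwise-sup addition into union of intervals; the only point needing a short verification is multiplicativity $R_\alpha R_\beta=R_{\alpha\beta}$, which uses order preservation and extensivity of $\beta$. This yields $\mathrm{Id}(\mathcal{U}_n)\subseteq\mathrm{Id}(\mathcal{C}_n)$, and combining, $\mathrm{Id}(\mathcal{R}_n)\subseteq\mathrm{Id}(\mathcal{U}_n)\subseteq\mathrm{Id}(\mathcal{C}_n)$.

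The heart of the argument is the reverse inclusion $\mathrm{Id}(\mathcal{C}_n)\subseteq\mathrm{Id}(\mathcal{R}_n)$, equivalently $\mathcal{R}_n\in\mathsf{HSP}(\mathcal{C}_n)$, and this is where I expect the main difficulty: one must reproduce, inside the comparatively tiny chain $\mathcal{C}_n$, a separation exhibited by arbitrary reflexive relations on $n$ points. Arguing contrapositively, suppose $A\approx B$ fails in $\mathcal{R}_n$. This gives a substitution $\theta$ and a pair $(a,b)$ with $a\xrightarrow{w,\theta}b$ for some $w\in A$ while no word of $B$ connects $a$ to $b$. The observation that lets the small chain suffice is that reflexivity makes the induced action $X\mapsto X.\theta(x)$ on the boolean lattice $2^{\{1,\dots,n\}}$ extensive: reading $w$, the reachable sets $S_0=\{a\}\subseteq S_1\subseteq\cdots\subseteq S_k=\{a\}.w\theta$ form an increasing chain, hence take at most $n$ distinct values $T_0\subsetneq T_1\subsetneq\cdots\subsetneq T_r$ with $r+1\le n$. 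I would take these sets, ordered by inclusion, as an initial segment of the domain of $\mathcal{C}_n$ and define $\theta'$ by letting $\theta'(x)$ send $T_j$ to the largest $T_\ell$ with $T_\ell\subseteq T_j.\theta(x)$, extending by the identity on the remaining chain elements; monotonicity and extensivity of $\theta'(x)$ follow from monotonicity of the action and from $T_j\subseteq T_j.\theta(x)$.

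It then remains to check the two computations that make $\theta'$ separate $A$ from $B$. First, reading $w$ drives $T_0$ exactly up the chain $T_0,T_1,\dots,T_r$: each $S_t$ is itself one of the $T$'s, so the ``round down to the chain'' step fixes it, and $T_0.(w\theta')=T_r$ is the top. Second, a routine induction gives the containment $T_0.(w'\theta')\subseteq\{a\}.w'\theta$ for every $w'\in B$, and since $b\notin\{a\}.w'\theta$ while $b\in T_r$, the value $T_0.(w'\theta')$ lies strictly below $T_r$. Hence $T_0.(A\theta')=T_r$ whereas $T_0.(B\theta')<T_r$, so $A\approx B$ fails in $\mathcal{C}_n$. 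This establishes $\mathrm{Id}(\mathcal{C}_n)\subseteq\mathrm{Id}(\mathcal{R}_n)$; with the previous paragraph all three identity sets coincide, and the finite basis transfers from $\mathcal{C}_n$ via Kl\'ima--Pol\'ak. The one genuinely delicate point, and where I would spend the most care, is the extensivity/increasing-chain observation and the check that folding the reachable sets into $\mathcal{C}_n$ retains exactly enough structure to reproduce the failure; the rest is bookkeeping.
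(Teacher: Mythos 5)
Your proposal is correct, but on the crucial inclusion it takes a genuinely different route from the paper. The paper's strategy is syntactic: it introduces the class $J_k$ of identities whose two sides have the same sets of subwords of length at most $k$, proves via a general proposition (any join-semiring of order preserving and extensive transformations of a join-semilattice whose longest chain has length $k+1$ satisfies all of $J_k$, applied to $\mathcal{R}_{k+1}$ acting on nonzero boolean vectors) that $J_k\subseteq\mathsf{Id}(\mathcal{R}_{k+1})$, and then invokes Kl\'ima--Pol\'ak twice: once for the equational characterization $\mathsf{Id}(\mathcal{C}_{k+1})=\mathsf{Id}(\mathcal{S}_{k+1})=J_k$ (through the stair-triangular copy $\mathcal{S}_{k+1}$ of $\mathcal{C}_{k+1}$ inside $\mathcal{U}_{k+1}$), and once for the finite basis; the sandwich $J_k=\mathsf{Id}(\mathcal{C}_{k+1})\supseteq\mathsf{Id}(\mathcal{U}_{k+1})\supseteq\mathsf{Id}(\mathcal{R}_{k+1})\supseteq J_k$ finishes. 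You instead prove $\mathsf{Id}(\mathcal{C}_n)\subseteq\mathsf{Id}(\mathcal{R}_n)$ semantically, by transferring a falsifying substitution: reflexivity makes the induced action on subsets extensive, the reachable sets along the witnessing word form a chain $T_0\subsetneq\cdots\subsetneq T_r$ with $r+1\le n$, and rounding down to this chain yields order preserving extensive transformations of an $n$-element chain that still separate the two sides (your two inductions do go through: the supremum over $A$ equals $T_r$ because $\theta'$ maps the $T$-chain into itself, and every word of $B$ lands strictly below $T_r$ since its value is contained in a set missing $b$). Your interval-matrix embedding $\mathcal{C}_n\hookrightarrow\mathcal{U}_n$ is exactly the stair-triangular identification the paper cites, just verified by hand. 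As for what each approach buys: yours needs Kl\'ima--Pol\'ak only for the finite basis statement, making the coincidence of the three equational theories self-contained; the paper's detour through $J_k$ costs the extra citation but yields more information, namely an explicit combinatorial description of the common identity set and a concrete basis identity, and its Proposition holds for arbitrary join-semirings of extensive order preserving transformations. It is worth noting that both arguments exploit the same phenomenon --- extensive maps can only climb a chain of bounded length --- which in the paper extracts a short subword of each $u_i$, and in your proof bounds the number of distinct reachable sets.
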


To prove Theorem~\ref{finite basis}, we need some definitions, notation and auxiliary results. If $u,v$ are words over the same alphabet $\Sigma$, we say that $u$ is a \emph{subword} of $v$ whenever there exist words $u_1,\dots,u_n,v_0,v_1,\dots,v_{n-1},v_n\in\Sigma^\ast$ such that
\[
u=u_1\cdots u_n\quad \text{and}\quad v=v_0u_1v_1\cdots v_{n-1}u_nv_n;
\]
in other terms, this means that one can extract $u$ treated as a sequence of letters from the sequence $v$. Let $\mathsf s_k(w)$ denote the set of all subwords of $w$ of length $\le k$. Recall that a \emph{semiring identity} over an alphabet $\Sigma$, or simply \emph{identity}, is merely a pair $(u_1+\cdots+u_\ell,v_1+\cdots+v_r)$, where $u_1,\dots,u_\ell,v_0,\dots,v_r\in\Sigma^+$, usually written as 
\begin{equation}
\label{identity}
u_1+\cdots+u_\ell\approx v_1+\cdots+v_r.
\end{equation}
We denote by $J_k$ the set of all identities~\eqref{identity} with $\bigcup_{i=1}^\ell \mathsf s_k(u_i)=\bigcup_{i=1}^r \mathsf s_k(v_i)$. For an ai-semiring $\mathcal{S}$, we denote by $\mathsf{Id}(\mathcal{S})$ the set of all identities of $\mathcal{S}$. 

\begin{proposition}
\label{order}
Let $\mathcal{S}=\langle S;+,\cdot\rangle$ be a join-semiring of order preserving and extensive transformations of a join-semilattice $\langle Q;\le\rangle$. If $k+1$ is the length of the longest chain in $\langle Q;\le\rangle$, then $\mathcal{S}$ satisfies every identity in  $J_k$.
\end{proposition}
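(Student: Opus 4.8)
The plan is to argue semantically, evaluating both sides of an identity from $J_k$ as transformations of $Q$ and comparing their action on an arbitrary point $q\in Q$. I fix a substitution sending each variable to an element of $S$, and for a word $w$ I write $\bar w$ for the resulting transformation, so that the left-hand side of~\eqref{identity} evaluates to the transformation $q\mapsto\mathsf{sup}_i(q.\bar u_i)$ and the right-hand side to $q\mapsto\mathsf{sup}_j(q.\bar v_j)$; these suprema exist because $Q$ is a join-semilattice. It therefore suffices to prove, for every $q\in Q$, the equality $\mathsf{sup}_i(q.\bar u_i)=\mathsf{sup}_j(q.\bar v_j)$, and by the symmetry of the defining condition $\bigcup_i\mathsf s_k(u_i)=\bigcup_j\mathsf s_k(v_j)$ of $J_k$ it is enough to establish the inequality $\le$.

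Two facts drive the argument. The first is \emph{monotonicity}: if $u$ is a subword of $w$, then $q.\bar u\le q.\bar w$ for every $q$. Since every element of $S$ is extensive and order preserving, so is every product of such elements; hence inserting a single letter into a word can only increase the value of $q.(\cdot)$, as extensivity handles the inserted letter and order preservation propagates the resulting inequality through the suffix, and the general case follows by inserting the letters of $w$ not used by $u$ one at a time. The second is a \emph{saturation} property: for every word $w$ and every $q$ there is a subword $w'$ of $w$ with $|w'|\le k$ and $q.\bar{w'}=q.\bar w$.

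To prove saturation I write $w=x_1\cdots x_m$ and set $q_0=q$ and $q_i=q_{i-1}.\bar{x_i}$, so that extensivity yields the chain $q_0\le q_1\le\cdots\le q_m=q.\bar w$ in $Q$. Let $i_1<\cdots<i_t$ be the positions at which the chain strictly increases. The distinct values among the $q_i$ form a strictly increasing chain with $t+1$ elements, so $t+1\le k+1$, i.e.\ $t\le k$. Putting $w'=x_{i_1}\cdots x_{i_t}$, we have $|w'|\le k$, and a short induction, using that $\bar{x_i}$ fixes $q_{i-1}$ at each non-increasing position, shows that applying $w'$ to $q$ passes through exactly the values $q_{i_1},\dots,q_{i_t}=q_m$; hence $q.\bar{w'}=q.\bar w$.

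With both facts in hand the proposition assembles quickly: given $q$ and an index $i$, saturation yields a subword $w'$ of $u_i$ of length $\le k$ with $q.\bar u_i=q.\bar{w'}$; the defining condition of $J_k$ places $w'\in\mathsf s_k(u_i)\subseteq\bigcup_j\mathsf s_k(v_j)$, so $w'$ is a subword of some $v_j$, and monotonicity gives $q.\bar u_i=q.\bar{w'}\le q.\bar v_j\le\mathsf{sup}_j(q.\bar v_j)$. Taking the supremum over $i$ proves $\le$, and the reverse inequality is symmetric. I expect the main obstacle to lie in the saturation step: it is immediate from the chain-length bound that few letters cause strict increases, but one must also verify that deleting all the remaining letters leaves the action on $q$ unchanged, and that is where the bookkeeping in the induction is concentrated.
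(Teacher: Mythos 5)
Your proof is correct and takes essentially the same route as the paper: your saturation lemma is precisely the paper's decomposition $u_i=u_{i1}x_{i1}u_{i2}x_{i2}\cdots x_{im_i}u_{i,m_i+1}$, which extracts the letters at which the action on $q$ strictly increases and bounds their number by $k$ via the chain length, and your monotonicity fact is exactly the paper's use of extensivity and order preservation to get $q_0.v_{r_i}\varphi\ge q_0.(x_{i1}\cdots x_{im_i})\varphi=q_0.u_i\varphi$. The only cosmetic difference is that you package these two steps as standalone lemmas, which lets the degenerate case (where $q.u_i\varphi=q$ for all $i$, treated separately in the paper) fall out of the same argument.
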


\begin{proof}{}
Take any identity~\eqref{identity} in $J_k$ and let $\Sigma$ be the alphabet of the words $u_1,\dots,u_\ell$ and $v_1,\dots,v_r$. We have to show that for every substitution $\varphi\colon\Sigma\to S$ one gets $(u_1+\cdots+u_\ell)\varphi=(v_1+\cdots+v_r)\varphi$ or, equivalently, $q.(u_1+\cdots+u_\ell)\varphi=q.(v_1+\cdots+v_r)\varphi$ for all $q\in Q$.

Thus, fix an arbitrary substitution $\varphi\colon\Sigma\to S$ and an arbitrary element $q_0\in Q$. By symmetry, it suffices to verify that 
\[
q_0.(u_1+\cdots+u_\ell)\varphi\le q_0.(v_1+\cdots+v_r)\varphi.
\] 
If $q_0.u_i\varphi=q_0$ for all $i=1,\dots,\ell$, then 
\[
q_0.(u_1+\cdots+u_\ell)\varphi=q_0\le q_0.(v_1+\cdots+v_r)\varphi
\]
because the transformation $(v_1+\cdots+v_r)\varphi$ is extensive. Suppose now that the set $\{i_1,\dots,i_p\}=\{i\mid 1\le i\le \ell,\ q_0.u_i\varphi>q_0\}$ is not empty. For any $i=i_1,\dots,i_p$, denote by $u_{i1}$ the longest prefix of the word $u_i$ such that $q_0.u_{i1}\varphi=q_0$ and let $x_{i1}\in\Sigma$ be the letter that follows $u_{i1}$ in $u_i$ so that $u_i=u_{i1}x_{i1}w_{i1}$ for some $w_{i1}\in\Sigma^\ast$. Then
\begin{equation}
\label{step1}
q_{i1}=q_0.(u_{i1}x_{i1})\varphi=q_0.u_{i1}\varphi x_{i1}\varphi=q_0.x_{i1}\varphi\ge q_0
\end{equation}
because the transformation $x_{i1}\varphi$ is extensive, and by the choice of the prefix $u_{i1}$ the inequality $q_1\ge q_0$ is in fact strict. Now denote by $u_{i2}$ the longest prefix of the word $w_{i1}$ such that $q_{i1}.u_{i2}\varphi=q_{i1}$ and let $x_{i2}\in\Sigma$ be the letter that follows $u_{i2}$ in $w_{i1}$ so that $u_i=u_{i1}x_{i1}u_{i2}x_{i2}w_{i2}$ for some $w_{i2}\in\Sigma^\ast$. Then
\begin{equation}
\label{step2}
q_{i2}=q_{i1}.(u_{i2}x_{i2})\varphi=q_{i1}.u_{i2}\varphi x_{i2}\varphi=q_{i1}.x_{i2}\varphi>q_{i1}
\end{equation}
and substituting the expressions for $q_{i1}$ from~\eqref{step1} in the expressions for $q_{i2}$ in~\eqref{step2}, we also get
\[
q_{i2}=q_0.(u_{i1}x_{i1}u_{i2}x_{i2})\varphi=q_0.(x_{i1}x_{i2})\varphi.
\]
Continuing this process, we finally arrive at the decomposition
\begin{equation}
u_i=u_{i1}x_{i1}u_{i2}x_{i2}\cdots x_{im_i}u_{m_i+1}
\label{decomposition}
\end{equation}
such that $q_0.u_i\varphi=q_0.(x_{i1}\cdots x_{im_i})\varphi$ and
\[
q_{im_i}>q_{i,m_i-1}>\dots>q_{i1}>q_0
\]
where $q_{ij}=q_0.(x_{i1}\cdots x_{ij})\varphi$ for $j=1,2,\dots,m_i$. Since the longest chain in $\langle Q;\le\rangle$ has $k+1$ elements, we conclude that $m_i\le k$. As the identity~\eqref{identity} is taken from $J_k$, the word $x_{i1}\cdots x_{im_i}$ being in view of~\eqref{decomposition} a subword of length $\le k$ of the word $u_i$ must be a subword of a word in $\{v_1,\dots,v_r\}$. Thus, there is $r_i\in\{1,\dots,r\}$ such that
\[
v_{r_i}=v_{i1}x_{i1}v_{i2}x_{i2}\cdots x_{im_i}v_{i,m_i+1}
\]
for some words $v_{i1},\dots,v_{i,m_i+1}\in\Sigma^\ast$. Using the fact that the transformations in $\mathcal{S}$ are extensive and order preserving, we readily obtain that
\[
q_0.v_{r_i}\varphi\ge q_0.(x_1x_2\cdots x_m)\varphi=q_0.u_i\varphi,
\]
$i=i_1,\dots,i_p$. Since $\mathcal{S}$ is a join-semiring of order preserving and extensive transformations of $\langle Q;\le\rangle$, it follows that 
\[
\begin{aligned}
q_0.(u_1+\cdots+u_\ell)\varphi&{}=q_0.(u_{i_1}+\cdots+u_{i_p})\varphi\\
&{}\le q_0.(v_{r_{i_1}}+\cdots+v_{r_{i_p}})\varphi\\
&{}\le q_0.(v_1+\cdots+v_r)\varphi
\end{aligned}
\]
as required.
\end{proof}

\begin{corollary}
\label{reverse inclusion2}
$J_k\subseteq \mathsf{Id}(\mathcal{R}_{k+1})$.
\end{corollary}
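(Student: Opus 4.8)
The plan is to exhibit $\mathcal{R}_{k+1}$ as (an isomorphic copy of) a join-semiring of order preserving and extensive transformations of a join-semilattice whose longest chain has \emph{exactly} $k+1$ elements, and then to invoke Proposition~\ref{order}. First I would fix the underlying set $X=\{1,\dots,k+1\}$ on which the relations in $R_{k+1}$ live, and take $Q$ to be the set of all \emph{nonempty} subsets of $X$, ordered by inclusion. This is a join-semilattice under union, and its longest chain $\{1\}\subset\{1,2\}\subset\cdots\subset\{1,\dots,k+1\}$ has precisely $k+1$ elements.

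Next I would let each reflexive relation $\rho\in R_{k+1}$ act on $Q$ as the transformation $A\mapsto A.\rho=\{y\in X\mid \exists x\in A,\ (x,y)\in\rho\}$, the image of $A$ under $\rho$. The verification that this yields the required structure is routine: reflexivity of $\rho$ gives $A\subseteq A.\rho$, which simultaneously shows that $A.\rho$ is again nonempty (so $Q$ is closed under the action) and that the transformation is extensive; the implication $A\subseteq B\Rightarrow A.\rho\subseteq B.\rho$ shows that it is order preserving; boolean matrix multiplication coincides with relational composition, so that $A.(\rho\sigma)=(A.\rho).\sigma$ and multiplication becomes composition of transformations; and $A.(\rho+\sigma)=A.\rho\cup A.\sigma=\mathsf{sup}(A.\rho,A.\sigma)$, so that addition corresponds to the join.

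Finally I would observe that the assignment $\rho\mapsto(A\mapsto A.\rho)$ is injective, since its action on the singleton $\{x\}$ recovers the $x$-th row of $\rho$; hence it is an isomorphism of $\mathcal{R}_{k+1}$ onto a join-semiring of order preserving and extensive transformations of $Q$. Applying Proposition~\ref{order} to this copy, whose defining join-semilattice has longest chain of length $k+1$, and using that identities are preserved under isomorphism, I conclude that $\mathcal{R}_{k+1}$ satisfies every identity in $J_k$, i.e. $J_k\subseteq\mathsf{Id}(\mathcal{R}_{k+1})$.

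The only genuinely delicate point is the choice of $Q$. Discarding the empty set is essential: it drops the length of the longest chain from $k+2$ (for the full power set, on which $\emptyset$ is a fixed point) down to $k+1$. Working with the full power set would only yield the weaker inclusion $J_{k+1}\subseteq\mathsf{Id}(\mathcal{R}_{k+1})$, which is strictly less than the desired statement because $J_{k+1}\subseteq J_k$. Thus the whole argument hinges on checking that the nonempty subsets form a sub-join-semilattice that is closed under the action, which is exactly what reflexivity guarantees.
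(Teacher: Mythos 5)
Your proof is correct and is essentially the paper's own argument in different notation: the paper takes $Q$ to be the nonzero vectors in $\mathcal{B}^{k+1}$ with the componentwise order and the right matrix action, which under the characteristic-vector bijection is exactly your join-semilattice of nonempty subsets of $\{1,\dots,k+1\}$ with the relational-image action. The only cosmetic difference is that the paper cites Volkov for the fact that the action is a faithful representation by order preserving and extensive transformations, whereas you verify these routine facts (including the exclusion of the zero element to keep the longest chain at $k+1$) directly.
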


\begin{proof}{}
Let $Q=\mathcal{B}^{k+1} \setminus\{(0,\dots,0)\}$ be the set of all non-zero $(k+1)$-vectors over the boolean semiring $\mathcal{B}=\langle\{0,1\};+,\cdot\rangle$. We equip the set $Q$ with the component-wise order $\le$ induced by the standard order $0<1$ in $\mathcal{B}$. Then $\langle Q,\le\rangle$ becomes a join-semilattice in which the longest chain has length $k+1$. The semigroup $\langle R_{k+1};\cdot\rangle$ acts on the set $Q$ by the usual matrix multiplication on the right: if $q=(q_i)\in Q$ and $\alpha=(\alpha_{ij})\in R_{k+1}$, then 
\[
q.\alpha=\left(\sum_{i=1}^{k+1}q_i\alpha_{i1},\dots,\sum_{i=1}^{k+1}q_i\alpha_{i\,k+1}\right).
\]
As it is noted in~\cite{Volkov-04}, this is a faithful representation of the semigroup by order preserving and extensive transformations of $\langle Q,\le\rangle$. Further, for any $q=(q_i)\in Q$ and $\alpha=(\alpha_{ij}),\beta=(\beta_{ij})\in R_{k+1}$ we have:
\[
\begin{aligned}
&q.(\alpha+\beta)=\left(\sum_{i=1}^{k+1}q_i(\alpha_{i1}+\beta_{i1}), \dots,\sum_{i=1}^{k+1}q_i(\alpha_{i\,k+1}+\beta_{i\,k+1})\right)\\
&=\left(\sum_{i=1}^{k+1}q_i\alpha_{i1}+\sum_{i=1}^{k+1}q_i\beta_{i1}, \dots,\sum_{i=1}^{k+1}q_i\alpha_{i\,k+1}+\sum_{i=1}^{k+1}q_i\beta_{i\,k+1}\right)\\
&=\left(\mathsf{max}\!\left(\sum_{i=1}^{k+1}q_i\alpha_{i1},\sum_{i=1}^{k+1}q_i\beta_{i1}\right), \dots,\mathsf{max}\left(\sum_{i=1}^{k+1}q_i\alpha_{i\,k+1},\sum_{i=1}^{k+1}q_i\beta_{i\,k+1}\right)\right)\\
&=\mathsf{sup}(q.\alpha,q.\beta).
\end{aligned}
\]
Now Proposition~\ref{order} applies.
\end{proof}

\begin{proof}{ of Theorem~\ref{finite basis}}
The ai-semirings $\mathcal{U}_1$, $\mathcal{R}_1$ and $\mathcal{C}_1$ are trivial and thus admit a finite basis of identities.
Denote by $\mathcal S_{k+1}=\langle S_{k+1};+,\cdot\rangle$ the subsemiring of $\mathcal U_{k+1}$ consisting of all \textit{stair triangular} matrices, i.e. matrices $\bigl(\alpha_{ij}\bigr)$ satisfying: if $\alpha_{ij} = 1$, $i < j$, then
\[
\alpha_{ii}=\alpha_{ii+1}=\cdots=\alpha_{ij}=\alpha_{i+1j}=\cdots=\alpha_{jj}=1.
\]
It is noticed in~\cite[Section~5]{Klima-Polak-10}, the monoid $\langle S_{k+1};\cdot\rangle$ is isomorphic to the monoid $\langle C_{k+1};\cdot\rangle$.
In fact, it is easy to see that the ai-semiring $\mathcal S_{k+1}$ is isomorphic to $\mathcal C_{k+1}$.
Further, it is shown in~\cite[Sections~4.1 and~5]{Klima-Polak-10} that $\mathsf{Id}(\mathcal{S}_{k+1})=J_k$ and the ai-semiring $\mathcal{S}_{k+1}$ is finitely based by the identity
\[
x_1\cdots x_{k+1}\approx \sum_{i=1}^{k+1} x_1\cdots x_{i-1}x_{i+1}\cdots x_{k+1},
\]
Since $J_k=\mathsf{Id}(\mathcal{C}_{k+1})=\mathsf{Id}(\mathcal{S}_{k+1})\supseteq \mathsf{Id}(\mathcal{U}_{k+1})\supseteq \mathsf{Id}(\mathcal{R}_{k+1})$, these facts and Corollary~\ref{reverse inclusion2} imply that the ai-semirings $\mathcal{U}_{k+1}$,  $\mathcal{R}_{k+1}$ and $\mathcal{C}_{k+1}$ satisfy the same identities and these identities admit a finite basis. Theorem~\ref{finite basis} is thus proved.
\end{proof}

\small

\end{document}